\theoremstyle{definition}
\newtheorem{theorem}{Theorem}[section]
\newtheorem{definition}[theorem]{Definition}
\newtheorem{example}[theorem]{Example}
\newtheorem{proposition}[theorem]{Proposition}
\numberwithin{equation}{section}
\DeclareMathAlphabet\scr{U}{scr}{m}{n}
\SetMathAlphabet\scr{bold}{U}{scr}{b}{n}
  \DeclareFontFamily{U}{scr}{\skewchar\font'177}%
  \DeclareFontShape{U}{scr}{m}{n}{<-6>rsfs5<6-8>rsfs7<8->rsfs10}{}%
  \DeclareFontShape{U}{scr}{b}{n}{<-6>rsfs5<6-8>rsfs7<8->rsfs10}{}%
\renewcommand{\Re}{\mathrm{Re}}
\renewcommand{\epsilon}{\varepsilon}
\renewcommand{\theta}{\vartheta}
\renewcommand{\rho}{\varrho}
\begin{document}
\title[]{A note on the Esscher transform of affine Markov processes}

%\author[E.~Mayerhofer]{Eberhard Mayerhofer}
%\address{Dublin City University} \email{eberhard.mayerhofer@dcu.ie}
%
%\thanks{Financial support from Grant ERC 278295,
%and SFI  08/SRC/FMC1389 is gratefully acknowledged.}
\maketitle
\begin{abstract}
In affine models, both the martingale property of stochastic exponentials and non-explosion of affine processes can be characterized
in terms of minimality of solutions to a system of generalized Riccati differential equations. We improve these characterizations for affine processes on 
$\mathbb R_+^m\times\mathbb R^n$ by Mayerhofer, Muhle-Karbe and Smirnov (2011) and Keller-Ressel and Mayerhofer (2014)  by showing that 
the characterizing minimal solution is the unique one.
%
%: A process is conservative if and only if
%a related Riccati differential equation with trivial initial data does not admit non-trivial solutions. By an equivalent change of measure we turn this result into a similar improvement of the martingale characterization of exponentially affine functionals by Keller-Ressel and Mayerhofer (2014).
%Mathematics Subject Classification (2000):
\end{abstract}

\section{Introduction}
Affine Markov Processes constitute a fairly general class of Markov processes. They comprise, among others,
the class of L\'evy-processes, OU-type processes (with linear drift) driven by L\'evy noise, the well known
Feller diffusion \cite{feller51}, many celebrated interest rate models (\cite{daisingleton00}, \cite{duffiekan96},  \cite{duffie2000transform}), stochastic volatility models in finance  (Such as Bates' \cite{Bates2000}, Heston's \cite{Heston1993} and Barndorff-Nielsen,  \& Shepard's \cite{Barndorff2001}), and models of default risk, e.g. \cite{garleanu}. Their extensive use in finance has motivated stochastic processes research in the last two decades. Existence Theories for general affine processes on particular state spaces have been established: We mention the one for canonical state spaces $\mathbb R_+^m\times\mathbb R^n$ \cite{dfs} and for positive semi-definite matrices $S_d^+$ \cite{cfmt}.

Undoubtably, the class of L\'evy processes \cite{Sato1999} is the largest subclass of affine processes with a well known and well established theory. 
Affine processes allow differential semimartingale characteristics which are affine in the state variable. This more general dynamic behaviour complicates their analysis, and distinguishes them from the L\'evy-class e.g. in the following regards: 
\begin{enumerate}
\item \label{issue 1xx} A L\'evy process is non-explosive, if and only if has zero potential. Affine processes, however, may explode
e.g., due to increasing (along every path) jump intensities, see e.g., \cite[Example 9.3 ]{dfs}.
\item \label{issue 2xx} A L\'evy process has finite exponential moment, if and only the associated L\'evy-measure does \cite[Theorem 25.17]{Sato1999}. This entails, for instance, that the moment explosion is a time independent feature. In affine models, however,
moment explosion may occur in finite time. For instance, the Feller diffusion \cite{feller51} has non-centrally chi-square distributed
marginal distributions, hence does not admit finite exponential moments of all orders. More interesting examples are provided by
jump-diffusions.
\item \label{issue 3xx} If $L=(L_t)_t$ is a conservative L\'evy process with $L_0=0$ a.s., and $g(t,\theta):=\mathbb E[e^{\theta^\top L_t}]<\infty$ for $\theta\in\mathbb R^d$ then the process
\begin{equation}\label{eq: levy martingale}
M_t:=e^{\theta ^\top L_t}/g(t,\theta)
\end{equation}
is a true martingale (this is the so-called Esscher transform). This is not true for affine processes, in general, due to time-dependent moment explosion.
%\item Not all affine processes have infinitely divisible marginal distributions. For instance, on $S_d^+$ only pure jump processes are infinitely divisible \cite[Theorem 2.9]{cfmt}. A well known example for a diffusion process which is not infinitely divisible is
%the Wishart processes \cite{bru}: Its transition laws are not infinitely divisible \cite{LetacMassam}.
\end{enumerate}

This paper deals expands on \ref{issue 1xx} and \ref{issue 3xx}for affine processes. In this setting, the martingale property of stochastic exponentials and non-explosion of affine processes can be characterized
in terms of minimality of solutions to a system of generalized Riccati differential equations. We improve the characterization of conservativeness for affine processes on 
$\mathbb R_+^m\times\mathbb R^n$ by \cite{dfs} and \cite{MKEM} by showing that minimal solutions are actually unique. In Theorem \ref{main theorem}, we show that a process is conservative if and only if
a related Riccati differential equation with trivial initial data does not admit non-trivial solutions. Using an exponential tilting technique we turn this result into a similar improvement of the martingale characterization of exponentially affine functionals of affine processes by \cite{MKREM}: For conservative affine processes $(X_t)_t$ starting at $x\in\mathbb R^d$, the natural generalization of  L\'evy martingales of the
form \eqref{eq: levy martingale} is given by processes of the form 
\[
M_t=e^{\theta ^\top (X_t-x)-\int_0^t \lambda^\top X_s ds}
\]
with an appropriately chosen $\lambda\in\mathbb R^d$. Not always such processes are either well defined for all times, nor do they always give 
rise to martingales. The characterization of the martingale property of $M_t$ is given by Theorem \ref{main second theorem}.
Nevertheless, an Example in the final section \ref{final sec} demonstrates that for a characterization of the validity of the affine transform formula and thus of moment explosion \ref{issue 2xx}
in terms of solvability of the associated generalized Riccati equations, the concept of minimal unique solutions as introduced in \cite{MKREM} is not redundant.

\section{Setting}

In this paper, $(X,P_x)_{x\in D}$ is a stochastically continuous, affine Markov process with state space
$D=\mathbb R_+^m\times\mathbb R^n$, see, e.g. \cite{dfs}. This means, there are functions $\phi(t,u),\psi(t,u)$ such that
\begin{equation}\label{eq aft}
\mathbb E[e^{\langle u, X_t\rangle }\mid X_0=x]=e^{\phi(t,u)+\langle \psi(t,u), x\rangle}
\end{equation}
for all $u\in i\mathbb R^d$, $t\geq 0$ and $x\in D$. Here $\langle \cdot, \cdot \rangle$ denotes the standard
Euclidean scalar product on $\mathbb R^d$. Stochastic continuity of $X$ implies that
$(\phi,\psi)$ are differentiable  \cite{regu1}, \cite{regu2}  with functional characteristics 
\[
F(u)=\left.\frac{\partial \phi(t,u)}{\partial t}\right\vert_{t=0},\quad R(u)=\left.\frac{\partial \psi(t,u)}{\partial t}\right\vert_{t=0}.
\]
Furthermore, $\phi,\psi$ solve the generalized Riccati differential equations
\begin{align}\label{eq ric1}
\partial_t\psi(t,u)&=R(\psi(t,u),\quad \psi(0,u)=u,\\\label{eq ric2}
\partial_t\phi(t,u)&=F(\psi(t,u),\quad \phi(0,u)=0,
\end{align}
where $F$ and $R=(R_1,R_2,\dots, R_d)$ are of L\'evy-Khintchine form on $\mathbb R^d$. In the following, we abbreviate the index sets $I=\{1,\dots,m\}$ and $J=\{1,\dots,m\}$, and for a vector $x\in \mathbb R^d$
we write $x=(x_I,x_J)$, where $x_I=(x_1,\dots,x_m)$
In particular\footnote{
 The precise parametric conditions can be found in \cite[Definition 2.6]{dfs}. For the sake of simplicyt we use the minimal information necessary to derive our conclusions.}, for a positive semidefinite matrix $a$, $b\in D$, and a L\'evy measure $\mu_0$ on $D$, we have
\[
F(u)=\langle a u, u\rangle+\langle b,u\rangle+\int_{D\setminus 0}(e^{\langle u,\xi\rangle}-1-\langle \chi(\xi),u\rangle)\mu_0(d\xi),
\]
where $\chi$ is a truncation function.

Also, for $i=1,\dots,m$, and for $u\in i\mathbb R^d$, the component $R_i$ can be written as
\begin{equation}\label{Ri}
R_i(u)=\alpha_i u_i^2+\langle \beta_i,u\rangle+\int_{D\setminus 0}(e^{\langle u,\xi\rangle}-1-\langle \chi_i(\xi),u\rangle)\mu_i(d\xi),
\end{equation}
where $\alpha_i\in\mathbb R_+$, $\beta_i\in\mathbb R^d$ with $\beta_{i,j}\geq 0$ for $j\neq i$, $1\leq j\leq m$. Here 
we use the truncation function $\chi_i$ from \cite{dfs}, which is a function $D\setminus\{0\}\rightarrow \mathbb R^d$
defined in components by 
\[
\chi_{i,j}(\xi)=(\xi_j \wedge 1)\frac{\xi_j}{\vert \xi_j\vert} \quad\text{for}\quad i\in J\cup\{i\},
\]
and $\chi_{i,j}\equiv 0$ for $j\in I\setminus\{i\}$.

Since $\mu_i\equiv 0$ for $i\in J$, we may adapt the definition of \cite[ Equation(5.1)]{MKREM}
\[
\mathcal Y:=\{y\in\mathbb R^d\mid \sum_{i=0}^m\int_{\|\xi\|\geq 1}e^{\langle y,\xi\rangle}\mu_i(d\xi)<\infty\}.
\]
which is the intersection of the effective domains of $F,R_1,\dots,R_d$. For a generic affine process, we only know a-priori that
 $0\in \mathcal Y$. But if $\mathcal Y$ has non-empty interior, $\mathcal Y^\circ\neq\varnothing$, then
on $\mathcal Y^\circ$, the functions $F, R$ are analytic. The last assumption will enter in section \ref{sec martingales} only.

The partial order induced by $R_+^m\times\mathbb R^n$ is denoted by $\preceq$ and is given by
$u\preceq v$ if and only if $u_i\leq v_i$ when $i\in I$ and $u_J=v_J$

We define a special property of a domain in $\mathbb R^d$:
\begin{definition}
A set $U\subset\mathbb R^d$ is order preserving, if for $u,v\in\mathbb R^d$ such that $v\in U$, we have
$u\preceq v$ imples $u\in U$. 
\end{definition}
From the definition of $R$ in \eqref{Ri} we can easily conclude that the domain $\mathcal Y$ is order preserving (for a similar statement for $D=\mathbb R_+^m$, see \cite{AGS}).

\section{Conservative Affine Processes}
 It can be shown that 
\[
\psi_J(t,u)=e^{\beta^\top_{JJ}}u
\]
for a real $n\times n$ matrix $\beta_{JJ}$, and that when $u_J=0$, the first $m$ components $\psi_I$ of $\psi$ satisfy the generalized Riccati differential equation
\begin{equation}\label{eq: reduced}
\partial_t g(t)=\widetilde R(g(t)),
\end{equation}
where $\widetilde R=R_I(u_I,0)$.

This section provides the following improvement of \cite[Theorem 3.4]{MKEM}:
\begin{theorem}\label{main theorem}
The following are equivalent:
\begin{enumerate}
\item \label{point 1} $(X,P_x)_{x\in D}$ is conservative.
\item \label{point 2} $F(0)=0$ and the only solution of $g(t)$ of \eqref{eq: reduced} with $g(0)=0$ is the trivial one.
\end{enumerate}
\end{theorem}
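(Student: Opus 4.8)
The plan is to prove both implications via the standard link between conservativeness and the affine transform formula at $u=0$. Recall that conservativeness of $(X,P_x)_{x\in D}$ is equivalent to $\phi(t,0)=0$ and $\psi(t,0)=0$ for all $t\geq 0$ (since $\mathbb E[e^{\langle 0,X_t\rangle}\mathbf 1_{\{X_t\in D\}}]=P_x(X_t\in D)$, and the affine transform formula evaluated at $u=0$ reads $P_x(X_t\in D)=e^{\phi(t,0)+\langle\psi(t,0),x\rangle}$). Using $\psi_J(t,0)=e^{\beta_{JJ}^\top t}\cdot 0=0$, the vanishing of $\psi(t,0)$ reduces to the vanishing of $\psi_I(t,0)$, which by \eqref{eq: reduced} is precisely the statement that $g(t):=\psi_I(t,0)$ is a solution of $\partial_t g=\widetilde R(g)$ with $g(0)=0$. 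So conservativeness amounts to: $g\equiv 0$ solves \eqref{eq: reduced} \emph{and} this particular solution $\psi_I(\cdot,0)$ coincides with it, together with $\phi(t,0)=0$, which via \eqref{eq ric2} and $F(\psi(\cdot,0))=F(0)$ is equivalent to $F(0)=0$ once $\psi(t,0)\equiv 0$ is known.

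For \ref{point 2} $\Rightarrow$ \ref{point 1}: if $\widetilde R(0)=R_I(0,0)=0$ (which follows from the uniqueness hypothesis, since otherwise $g\equiv 0$ is not even a solution, or rather the hypothesis should be read as asserting $g\equiv 0$ is the unique solution, hence in particular a solution, forcing $\widetilde R(0)=0$) and $g\equiv 0$ is the unique solution of \eqref{eq: reduced} through the origin, then $\psi_I(t,0)$, being such a solution, must vanish; combined with $\psi_J(t,0)=0$ and $F(0)=0$ integrated in \eqref{eq ric2}, we get $\phi(t,0)=0$, hence $P_x(X_t\in D)=1$ for all $x,t$, i.e.\ conservativeness.

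For \ref{point 1} $\Rightarrow$ \ref{point 2}: conservativeness gives $\phi(t,0)=\psi(t,0)=0$; feeding $\psi\equiv 0$ into \eqref{eq ric2} at $u=0$ yields $0=\partial_t\phi(t,0)=F(\psi(t,0))=F(0)$. It remains to show \eqref{eq: reduced} has no nontrivial solution through the origin. This is the crux. Here I would invoke the comparison/minimality machinery from \cite{MKEM}: $\widetilde R$ is quasi-monotone with respect to $\preceq$ on the order-preserving domain $\mathcal Y$ (this is exactly the structure of \eqref{Ri}: the off-diagonal drift coefficients $\beta_{i,j}\geq 0$ for $j\neq i$ in $I$, and the jump terms are monotone), so solutions of the Riccati ODE are ordered by their initial data and the minimal solution emanating from $0$ is well defined. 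Conservativeness is known (\cite[Theorem 3.4]{MKEM}) to be equivalent to $\psi_I(\cdot,0)$ being this minimal solution and to its being global; what must be added is that \emph{any} solution $h$ with $h(0)=0$ must equal $0$. Suppose $h\not\equiv 0$. Then $\widetilde R(0)=0$ forces, by quasi-monotonicity and the sign structure, that $h$ cannot be negative in the $I$-components (comparison with the zero solution from below), so $h\succeq 0$; but then one runs a comparison argument showing $\psi_I(t,u)$ for small $u\succ 0$ would be squeezed, contradicting the analyticity/continuity of $\psi$ at $u=0$ together with conservativeness — more cleanly, one uses that $h\succ 0$ on some interval combined with the local Lipschitz estimate to produce a solution violating uniqueness backward in time from a point where it meets $0$, which is impossible since $\widetilde R$ restricted to the relevant compact set is Lipschitz.

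The main obstacle is precisely this last step: upgrading ``minimal solution is global'' to ``the zero solution is the \emph{only} solution through the origin.'' The difficulty is that generalized Riccati vector fields need not be Lipschitz near the boundary of $\mathcal Y$, so Picard--Lindelöf does not apply directly; the resolution is to exploit that at the specific point $0\in\mathcal Y^\circ$ (or at least $0\in\mathcal Y$ with $\widetilde R(0)=0$) the vector field \emph{is} locally Lipschitz — $F$ and $R$ are analytic on $\mathcal Y^\circ$, and even on $\mathcal Y$ the structure \eqref{Ri} gives enough regularity near a zero of $\widetilde R$ — so that a standard uniqueness theorem for ODEs applies in a neighborhood of the origin, and the maximal principle/quasi-monotonicity rules out a solution that departs from and returns to $0$. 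I expect the argument to hinge on a careful local analysis of $\widetilde R$ near $0$ showing $|\widetilde R(y)|\leq C|y|$ there, which then forces $g\equiv 0$ by Grönwall.
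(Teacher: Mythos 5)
Your treatment of \ref{point 2} $\Rightarrow$ \ref{point 1} is fine (the paper simply cites \cite{dfs} and \cite{MKEM} for it), and your reduction of the hard direction to ``no nontrivial solution of \eqref{eq: reduced} through $0$'' is the right framing. The step $h\succeq 0$ is obtained in the paper not by quasi-monotone ODE comparison but by a probabilistic supermartingale argument (Proposition \ref{prop superbus}: the candidate exponential built from $g$ is a positive local martingale, hence a supermartingale dominating the true martingale built from $\psi$); this sidesteps the regularity issues that a purely analytic comparison theorem would raise on $\mathbb R_-^m$. That difference is tolerable. The genuine gap is in your final step, where you need local uniqueness for \eqref{eq: reduced} at the origin. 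You propose two justifications: that $0\in\mathcal Y^\circ$ so that $\widetilde R$ is analytic there, or that the structure \eqref{Ri} ``gives enough regularity near a zero of $\widetilde R$.'' Neither is available. The paper explicitly assumes only $0\in\mathcal Y$ for this theorem (the hypothesis $\mathcal Y^\circ\neq\emptyset$ is reserved for Section \ref{sec martingales}); since $\mathcal Y$ is merely order-preserving, it may well equal $\mathbb R_-^m\times\{0\}$ and have empty interior, so $\widetilde R$ need not even be finite at any point with a positive component. And the claim that a zero of a generalized Riccati vector field is automatically a regular point is refuted by the paper's own Example in Section \ref{final sec}: $R(u)=1-u-\sqrt{1-u}$ vanishes at $u=1$, yet two distinct solutions emanate from there. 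So the Gr\"onwall bound $|\widetilde R(y)|\leq C|y|$ near $0$ that you hope for cannot be asserted a priori.

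What the paper actually does at this point is a bootstrap that is absent from your proposal: assuming a nontrivial solution $g\succeq 0$ exists, pick times $t_i$ with $\theta_i:=g_i(t_i)>0$ for the non-vanishing components $i\in K$. Because $g$ solves the ODE, the jump integrals in \eqref{eq ric K} must be \emph{finite at these strictly positive points} $\theta$, and since the effective domain is order-preserving, finiteness propagates to the whole order interval $\{u_K\leq\theta\}$ (after discarding the identically-zero components and projecting the measures $\mu_i$ to $\hat\mu_i$ on $\mathbb R^r$). This order interval contains an open neighborhood of the origin in $\mathbb R^r$ precisely because all $\theta_i>0$, so the reduced vector field $\hat R$ is analytic near $0$ and Picard--Lindel\"of forces $h\equiv 0$ there --- the contradiction. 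In other words, the regularity you need is not assumed; it is \emph{derived from the existence of the hypothetical nontrivial nonnegative solution}. Without this mechanism your argument does not close.
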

The crucial point here is that we do not restrict ourselves to uniqueness only among $\mathbb R_-^m$--valued solutions
of \eqref{eq: reduced}, as in \cite{MKEM} or in \cite{dfs} (where the restriction is made to solutions being strictly negative
for $t>0$).

\begin{figure}[h]
\centering
\includegraphics[width=1.1\textwidth]{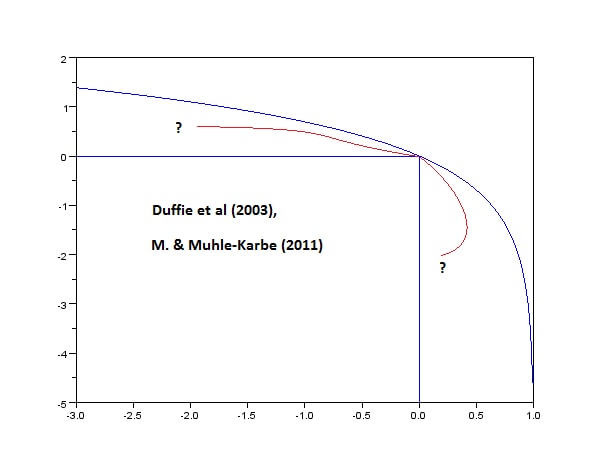}
\caption{\label{fig:front} A typical convex, order-preserving domain $U$  in $\mathbb R^2$, where equation \eqref{eq: reduced} may be defined. In ther conservative case, \cite{dfs} and \cite{MKREM} rule out any non-trivial solutions inside the rectangle. We show that no solutions such as the suggested
red curves exists (Proposition \ref{prop superbus}).
In general, the boundary of the domain $U$ is more complicated (Theorem \ref{main theorem})}
\label{fig:mvt}
\end{figure}

 A step to this result is the following special multivariate ODE comparison result, which is more general than \cite[Proposition 3.3]{MKEM}:
%\footnote{The reason is that \cite{MKEM}
%restricts to solutions with values in $\mathbb R_-^m$}.%, and a characterization of finite exponential moments \cite{MKREM}. 
\begin{proposition}\label{prop superbus}
Let $T>0$ and $u_I\in\mathbb R_-^m$. If  $g(t): [0,T)$ is a solution of \eqref{eq: reduced}, then $g(t)\succeq \psi_I(t,(u_I,0))$, for
all $t<T$.
\end{proposition}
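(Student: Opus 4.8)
The plan is to reduce the statement to the case of $\mathbb R_-^m$-valued solutions, which is exactly what \cite[Proposition 3.3]{MKEM} (and, for strictly negative solutions, \cite{dfs}) already supplies. Although the given solution $g$ of \eqref{eq: reduced} need not take values in $\mathbb R_-^m$, I would show that its coordinatewise truncation $w(t):=g(t)\wedge 0$ is an $\mathbb R_-^m$-valued super-solution of \eqref{eq: reduced} with the same initial value $u_I$; then $g\succeq w\succeq\psi_I(\cdot,(u_I,0))$ follows.

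First I would record two elementary structural facts about the field $\widetilde R$. (i) $\widetilde R$ is quasimonotone increasing: if $v\preceq v'$ and $v_i=v'_i$, then $\widetilde R_i(v)\le\widetilde R_i(v')$. This is read off \eqref{Ri}: the quadratic and the diagonal drift and truncation contributions involve only the $i$-th coordinate; the off-diagonal drift coefficients $\beta_{i,j}$, $j\neq i$, $j\le m$, are nonnegative; $\mu_i$ is supported on $D$, so $\xi_I\succeq 0$ and $v\mapsto e^{\langle v,\xi_I\rangle}$ is nondecreasing; and $\chi_{i,j}\equiv 0$ for $j\in I\setminus\{i\}$, so no off-diagonal compensator intervenes; finiteness of both sides is automatic because the effective domain of $\widetilde R$ (a slice of the order-preserving set $\mathcal Y$) is order preserving. (ii) On the face $\{v\in\mathbb R_-^m:v_i=0\}$ one has $\widetilde R_i(v)\le 0$: the quadratic and diagonal drift terms vanish, $\sum_{j\neq i}\beta_{i,j}v_j\le 0$, $\langle v,\xi_I\rangle\le 0$ forces $e^{\langle v,\xi_I\rangle}-1\le 0$, and the truncation term vanishes. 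As a by-product, (i) forces $\psi_I(\cdot,(u_I,0))$ to stay $\mathbb R_-^m$-valued (alternatively this follows from \eqref{eq aft} by sending the initial state to $+\infty$ along the axes), so it is a genuine $\mathbb R_-^m$-valued minimal solution in the sense of \cite{dfs}.

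Next I would verify that $w=g\wedge 0$ is a Dini super-solution of \eqref{eq: reduced}: it is locally Lipschitz (a minimum of a $C^1$ function and a constant), and $w(t)\preceq g(t)$ keeps it in the order-preserving effective domain, while $w(t)\in\mathbb R_-^m$ by construction. Where $g_i(t)<0$, $\dot w_i(t)=\widetilde R_i(g(t))\ge\widetilde R_i(w(t))$ by (i); where $g_i(t)>0$, the right-hand Dini derivative satisfies $D^+w_i(t)=0\ge\widetilde R_i(w(t))$ by (ii); and where $g_i(t)=0$, $D^+w_i(t)=\min(\dot g_i(t),0)\ge\widetilde R_i(w(t))$, combining (i) and (ii). Hence $w$ is an $\mathbb R_-^m$-valued super-solution of \eqref{eq: reduced} with $w(0)=u_I$, and the $\mathbb R_-^m$-valued comparison principle yields $w\succeq\psi_I(\cdot,(u_I,0))$ on $[0,T)$; since $g\succeq w$, this proves the claim.

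The delicate step is this last comparison at the level of super-solutions. \cite[Proposition 3.3]{MKEM} is phrased for solutions, whereas $w$ is merely a super-solution, and $\widetilde R$ is in general only continuous, not locally Lipschitz near the faces $\{v_i=0\}$, so no textbook comparison theorem applies directly. What rescues the argument is that $\psi_I(\cdot,(u_I,0))$ is characterized as \emph{the minimal} $\mathbb R_-^m$-valued solution in \cite{dfs}; combined with quasimonotonicity — for instance by approximating $\widetilde R$ from below by Lipschitz quasimonotone fields $\widetilde R^{(n)}$, for which $w$ is still a super-solution and the classical comparison theorem does apply, and then letting $n\to\infty$ — this upgrades the cited result to the super-solution level that I need. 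The remaining point, phrasing the super-solution inequality for the only Lipschitz $w$ via Dini derivatives (or in integrated form) and checking that $g(t)\wedge 0$ stays in the effective domain, is routine.
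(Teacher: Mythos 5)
Your proof is correct in substance but takes a genuinely different route from the paper's. The paper argues probabilistically: from the candidate solution $(f,(g,0))$ of the Riccati system it builds $\widetilde M(t,X_t)=e^{f(T-t)+\langle X_{I,t},g(T-t)\rangle}$, shows via the generator identity and localization that this is a positive local martingale, hence a supermartingale, and compares it at the common terminal value $T$ with the true martingale $M(t,X_t)=e^{\phi(T-t,(u_I,0))+\langle X_{I,t},\psi_I(T-t)\rangle}$; the inequality $M(t)\le \widetilde M(t)$ for every starting point $x$ then yields $g_i\ge\psi_i$ after taking logarithms and letting $x_i\to\infty$. You instead stay entirely at the ODE level: quasimonotonicity of $\widetilde R$ and the inward-pointing property on the faces of $\mathbb R_-^m$ make the truncation $w=g\wedge 0$ an $\mathbb R_-^m$-valued super-solution with the same initial datum, and the statement is reduced to the known minimality of $\psi_I$ among $\mathbb R_-^m$-valued solutions from \cite[Proposition 3.3]{MKEM}. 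Your facts (i) and (ii) and the Dini-derivative case analysis for $w$ all check out (one small slip: invariance of $\mathbb R_-^m$ for $\psi_I$ follows from (ii), not from (i); your alternative justification via \eqref{eq aft} is the correct one). The step you flag as delicate does close along the lines you indicate: cutting off the large-jump part of each $\mu_i$ gives Lipschitz quasimonotone fields $\widetilde R^{(n)}\uparrow\widetilde R$ uniformly on $\mathbb R_-^m$ (the error is controlled by $\mu_i(\|\xi\|>n)$), the corresponding solutions $y^{(n)}$ increase in $n$, stay in $\mathbb R_-^m$, lie below $w$ by the classical comparison theorem, and converge to an $\mathbb R_-^m$-valued solution of \eqref{eq: reduced}; the point to make explicit in a write-up is that one then applies the minimality of $\psi_I$ to this limit solution, rather than claiming $y^{(n)}\to\psi_I$. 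The trade-off between the two proofs: the paper's supermartingale argument is shorter and sidesteps every regularity issue of $\widetilde R$ at the boundary of $\mathbb R_-^m$, but leans on stochastic machinery; yours is self-contained at the analytic level, makes the structural reason for the comparison (quasimonotonicity plus invariance of the negative orthant) transparent, and genuinely reduces the new proposition to the old one instead of reproving it from scratch.
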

\begin{proof}
%A deterministic proof, using sophisticated comparison arguments for vector fields which might fail to be locally Liptschitz at the boundary of a domain,
%is given in \cite{MKREM}. For a stochastic proof, 
We introduce
\[
\widetilde M(t,X_t):=e^{f(T-t)+\langle X_{I,t},g(T-t)\rangle}
\]
where $f(t)=\int_0^{t} F((g(s),0))ds$. Clearly $(f,(g,0))$ solve the generalized Riccati differential equations
\eqref{eq ric1}--\eqref{eq ric2}. By \cite[Theorem 4.7.1]{ethier2009markov} and a localizing argument, the process
\[
\widetilde M(t,X_t)-\int_0^t \partial_s \widetilde M(s,X_s)+\mathcal A \widetilde M(s,X_s) ds
\]
is a local martingale, where $\mathcal A$ is the infinitesimal generator of $X$; furthermore by construction we have
\[
 \partial_t\widetilde M(t,X_t)+\mathcal A \widetilde M(t,X_t)=0,
\]
for each $t$, a.s., because for any $u\in\mathbb R^d$ and $f_u(x):=\exp(\langle u,x\rangle)$ we have
\[
\mathcal Af_u(x)=f_u(x)(F(u)+\langle R(u),x\rangle).
\]
So $\widetilde M(t)$ is a positive local martingale, hence a supermartingale\footnote{For an alternative proof using the explicit semimartingale decomposition
of $X$, please consult \cite[Proof of Proposition 4.6]{MKREM}}. On the other hand,
the process
\[
M(t,X_t):=e^{\phi(T-t,(u_I,0))+\langle X_{I,t},\psi_I(T-t)\rangle}
\]
is a martingale satisfying $M(T)=\widetilde M(T)$. It follows that for each $t\in[0,T]$, $M(t)\leq \widetilde M(t)$, and therefore, by taking logarithms, we obtain
$g_i(t)\geq \psi_i(t)$, for $t\in[0,T)$ each $i=1,\dots,m$.
\end{proof}
%\begin{remark}
%The condition $u_I\in\mathbb R_-^m$ in Proposition \ref{prop superbus} can be replaced by $u_I$ such that
%\[
%\mathbb E[e^{\langle u, X_T\rangle}\mid X_0=x]<\infty
%\]
%where we set $u=(u_I,0)$. Then the assertion of Proposition \ref{prop superbus} follows from \cite[Theorem 2.14]{MKREM}.
%However, not without further assumptions. The framework of \cite{MKREM} insists that $(X,P_x)_{x\in D}$ is conservative and that the domain, where $\phi,\psi$
%are solved, has non-empty interior. The last condition is not imposed in the present paper.
%\end{remark}
\subsection{Proof of Theorem \ref{main theorem}}
In view of \cite[Proposition 9.1]{dfs} or \cite[Theorem 3.4]{MKEM}, only the direction
\ref{point 1} $\Rightarrow $ \ref{point 2} need to be proved. Suppose, for a contradiction, there exists a local solution
$g(t)$ on $[0,T)$ which is non-trivial. By Proposition \ref{prop superbus} we have $g\succeq 0$, hence there exist $1\leq r\leq m$ components of $g$ which are not identically zero on $(0,T)$, while the last $m-r$ actually vanish identically zero
on this interval. We introduce
the index set $K=(1,\dots,r)$. Without loss of generality we may assume that for $t\in (0,T)$, $g_i(t)>0$
for $i\in K$ and $g_j(t)\equiv 0$ for $j\in I\setminus K$. Let us choose $t_1,\dots,t_r$ be in $(0,T)$, where $g_i(t_i)\neq 0$, $i\in K$. Set $T':=\min \{t_i\mid i\in K\}>0$. Knowing that $g_{i}(t)\equiv 0$ for $t\in [0,T')$ and all $i\in I\setminus K$, 
we see that $h(t):=g_{I\setminus K}(t)$ satisfies an autonomous differential equation
\begin{equation}\label{eq: reduced12}
\partial_t h(t)=\tilde R(h(t)),\quad h(0)=0
\end{equation}
where $\tilde R=R_K(u_K,0)$. For each $i\in K$ $\tilde R_i(u_K)$ is given by
\begin{equation}\label{eq ric K}
\tilde R_i(u_K)=\alpha_ i u_i^2+\langle\beta_{i,K},u_K\rangle+\int_{D\setminus \{0\}}(e^{\langle u_K, \xi_K\rangle}-1-(\xi_i\wedge 1) u_i)\mu_i(d\xi).
\end{equation}
with $\alpha_i\geq 0$, $\beta_{i,i}\in\mathbb R$, $\beta_{i,j}\geq 0$ for $j\neq i$, and
$\mu_i$ are positive, sigma-finite measures on $D\setminus\{0\}$ which integrate
\[
h(\xi)=(\|\xi_{I\setminus\{i\}}\|\wedge 1)(\|\xi_{J\cup \{i\}}\|\wedge 1)^2.
\] 
Let $\theta=(\theta_i)_{i\in K}$, where $\theta_i:=g_i(t_i)$, $i\in K$. Since by \eqref{eq: reduced12}
\[
\vert\int_{D\setminus \{0\}}(e^{\langle \theta, \xi_K\rangle}-1-(\xi_i\wedge 1) \theta_i)\mu_i(d\xi)\vert<\infty,
\]
%\]
%\begin{align*}
%&\vert\langle\theta_K,\xi_K\rangle\vert^2\int_0^1 (1-s) e^{s \langle\theta_K, \xi_K\rangle}ds=\\
%&=e^{\langle \theta_K, \xi_K\rangle}-1-(\xi_i\theta_i)\leq e^{\langle\theta_K, \xi_K\rangle}-1-\langle\theta_K, \xi_K\rangle<\infty
%\end{align*}
%and
%\[
%e^{\theta_K^\top \xi_K}-1=\theta_K^\top \xi_K \int_0^1 e^{s \theta_K^\top \xi_K}ds
%\]
we conclude that for
\[
\mathcal Y_*:=\{ u\in \mathbb R^{m+n}\mid \int_{\|\xi\|>1} e^{\langle u, \xi\rangle}\mu_i(d\xi)<\infty \quad i\in I\} 
\]
we have
\begin{equation}\label{eq inclusion}
C:=\{u\in\mathbb R^d\mid u_i\leq \theta_i, i\in K, u_{J\cup (I\setminus K)}=0\}\subseteq \mathcal Y_*
\end{equation}

Let us define the following $r$ L\'evy measures on $\mathbb R^r$,
\[
\hat\mu_i(A)=\int_{A\times\mathbb R^{m-r}}\mu_i(d\xi),\quad i\in K,
\]
where $A$ is a generic Borel set in $\mathbb R^r$ whose closure is supported away from $0$. In view of \eqref{eq inclusion} these measures are well defined. Using the latter, we define a new system of generalized Riccati differential equations on $\mathbb R^r$,
\begin{equation}\label{eq: reduced1}
\partial_t h(t)=\hat R(h(t)),\quad h(0)=0
\end{equation}
where $\hat R$ is derived from $\widetilde R(u_k)=R_K(u_K,0)$ in \eqref{eq ric K} by condensing $\mu_i$ into $\hat \mu_i$, for $i=1,\dots,r$: For $v\in\mathbb R^r$, we set
\[
\hat R(v)=\alpha_ i u_i^2+\langle\beta_{i,K},u_K\rangle+\int_{K\setminus \{0\}}(e^{\langle u_K, \xi_K\rangle}-1-(\xi_i\wedge 1)u_i)\hat\mu_i(d\xi).
\]

%By construction, $\tilde \mu_i$ are admissible (a fact that allows us to define a homogeneous affine
%process $Y$ with characteristics $\hat F=0,\hat R$, but we shall not need this fact, shall we?). 

By construction, we know $h(t)=(g_1(t),\dots,g_r(t))$
is a non-trivial solution of \eqref{eq: reduced1}. We know that
$\hat R$ is analytic at least on the set 
\[
\{ u\in\mathbb R^m\mid u_i\leq \theta \quad\text{for all}\quad i\in K\},
\]
where $\theta:=\min\{g_i(t_i),i=1,\dots,r\}>0$. That means that $\hat R$ is analytic in an open neighborhood of the origin. By
the standard existence and uniqueness theorem of ODEs, we must have $h\equiv 0$ on $[0,T')$,  a mere impossibility.
We have therefore proven uniqueness for solutions
of the IVP \eqref{eq: reduced} with initial data $g(0)=0$, and we are done.

\section{True Exponentially Affine Martingales}\label{sec martingales}
Suppose $(X,P_x)_{x\in D}$ be a conservative affine process on $D=\mathbb R_+^m\times\mathbb R^n$. In this section we are interested in characterizing the martingale property of discounted exponentially affine functionals of the form
\[
\tilde S_t= e^{-\int_0^t L(X_s)ds+ \langle \theta, X_t\rangle},
\]
for $\theta\in\mathbb R^d$, and the linear functional $L(x)=l+\langle \lambda, x\rangle$, $l\in\mathbb R,\lambda\in\mathbb R^d$.

Let us define the extended functional characteristics 
$R'(u)=R(u)-\lambda$, and $F'(u)=F(u)-l$, and the corresponding generalized Riccati differential equations
\begin{align}\label{ric ex 1}
\partial_t\psi(t,u)&= R'(\psi(t,u)),\quad \tilde\psi(0,u)=u,\\\label{ric ex 2}
\partial_t\phi(t,u)&= F'(\psi(t,u)),\quad \tilde\phi(0,u)=0.
\end{align}
For affine processes on canonical state spaces, the following is an improvement of 
\cite[Theorem 3.1 (2)]{MKREM}, because it is stated without using the notion of minimal solutions:
\begin{theorem}\label{main second theorem}
Let $x\in D^\circ$. The process $(\tilde S_t)_{t\geq 0}$ is a true $P_x$-martingale if and only if
$\theta\in\mathcal Y$, $F(\theta)=l$, $R(\theta)=\lambda$ and $\phi(\cdot,\theta)\equiv 0$ and $\psi(\cdot,\theta)\equiv\theta$
 are the unique solutions of the Riccati equations \eqref{ric ex 1}--\eqref{ric ex 2}.
\end{theorem}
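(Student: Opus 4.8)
\bigskip
\noindent\textbf{Proof outline.}
The plan is to deduce the statement from the minimal--solution characterisation of \cite[Theorem~3.1(2)]{MKREM} together with Theorem~\ref{main theorem}, the passage between the two being an exponential tilting (Esscher) transform. For the direction ``$\Leftarrow$'' there is nothing new to do: if $\theta\in\mathcal Y$, $F(\theta)=l$, $R(\theta)=\lambda$ and $(\phi\equiv0,\psi\equiv\theta)$ is the \emph{unique} solution of \eqref{ric ex 1}--\eqref{ric ex 2} with the prescribed initial data, then it is a fortiori the minimal one, and \cite[Theorem~3.1(2)]{MKREM} yields that $(\tilde S_t)$ is a true $P_x$--martingale.

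For the direction ``$\Rightarrow$'', assume $(\tilde S_t)$ is a true $P_x$--martingale. By \cite[Theorem~3.1(2)]{MKREM} we obtain $\theta\in\mathcal Y$, $F(\theta)=l$, $R(\theta)=\lambda$ and that the constant functions $\phi\equiv0$, $\psi\equiv\theta$ constitute the minimal solution of \eqref{ric ex 1}--\eqref{ric ex 2}; in particular this constant pair \emph{is} a solution, which is the ``$\Rightarrow$'' half of the equalities $F(\theta)=l$, $R(\theta)=\lambda$. It remains to upgrade ``minimal'' to ``unique''. Here I would normalise the density, $Z_t:=\tilde S_t/\tilde S_0=\exp\!\big(-\int_0^t L(X_s)\,ds+\langle\theta,X_t-x\rangle\big)$, which is a positive $P_x$--martingale with $\mathbb E_x[Z_t]=1$, and define the tilted law $P_x^\theta$ on path space by $dP_x^\theta/dP_x|_{\mathcal F_t}=Z_t$. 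Using $\mathcal Af_u(x)=f_u(x)\big(F(u)+\langle R(u),x\rangle\big)$ and $F(\theta)=l$, $R(\theta)=\lambda$, a routine generator computation shows that under $P_x^\theta$ the process $X$ is again a stochastically continuous affine process on $D$, with functional characteristics $F^\theta(u)=F(u+\theta)-F(\theta)$ and $R^\theta(u)=R(u+\theta)-R(\theta)$; the L\'evy--Khintchine admissibility of $R^\theta$ in the sense of \eqref{Ri} is precisely what $\theta\in\mathcal Y$ guarantees. Moreover $X$ is \emph{conservative} under $P_x^\theta$, since $X$ is conservative under $P_x$ and $Z$ is a true martingale, so $P_x^\theta(X_t\in D)=\mathbb E_x[Z_t\,\mathbf 1_{\{X_t\in D\}}]=\mathbb E_x[Z_t]=1$.

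Now I would apply Theorem~\ref{main theorem} to $(X,P_x^\theta)$: as $F^\theta(0)=F(\theta)-F(\theta)=0$ holds automatically, conservativeness forces the reduced equation $\partial_t g=\widetilde{R^\theta}(g)$, $g(0)=0$, with $\widetilde{R^\theta}(\cdot)=R^\theta_I(\cdot,0)$, to admit only the trivial solution. Finally I would descend back to \eqref{ric ex 1}--\eqref{ric ex 2}. Let $(\hat\phi,\hat\psi)$ be any solution with $\hat\psi(0)=\theta$, $\hat\phi(0)=0$. From the representation $\psi_J(t,u)=e^{t\beta_{JJ}^\top}u_J$ one reads off that $R_J$, hence $R'_J$, depends affinely on $u_J$ only, so $\hat\psi_J$ solves a decoupled linear inhomogeneous ODE; since $R(\theta)=\lambda$ gives $R'_J(\theta)=R_J(\theta)-\lambda_J=0$, the constant $\theta_J$ is a solution and uniqueness for linear ODEs yields $\hat\psi_J\equiv\theta_J$. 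With $\hat\psi_J$ frozen at $\theta_J$, the function $g(t):=\hat\psi_I(t)-\theta_I$ satisfies $g(0)=0$ and $\partial_t g=R_I(g+\theta_I,\theta_J)-\lambda_I=R^\theta_I(g,0)=\widetilde{R^\theta}(g)$, hence $g\equiv0$ by the previous step; therefore $\hat\psi\equiv\theta$ and $\hat\phi(t)=\int_0^t F'(\theta)\,ds=0$. Thus the constant pair is the unique solution.

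The main obstacle is the middle step, i.e.\ making the Esscher transform rigorous: one must check that the locally equivalent change of measure by $Z$ produces a genuine stochastically continuous affine Markov process on $D$ (preservation of the Markov property and of stochastic continuity) with the claimed characteristics, and that $(F^\theta,R^\theta)$ is admissible. The generator computation itself is elementary; the care lies in the probabilistic and functional-analytic bookkeeping, for which I would rely on the analogous arguments in \cite{MKREM} and on the general theory of exponential tilting of affine processes. Everything else is a quotation of \cite{MKREM}, a quotation of Theorem~\ref{main theorem}, or elementary ODE uniqueness.
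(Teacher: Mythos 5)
Your proposal is correct and follows essentially the same route as the paper: quote \cite[Theorem 3.1(2)]{MKREM} for the minimal-solution characterization, pass to the Esscher-tilted measure under which $X$ is a conservative affine process with characteristics $F(\cdot+\theta)-F(\theta)$, $R(\cdot+\theta)-R(\theta)$, invoke Theorem \ref{main theorem} for that process, and translate any competing solution $\zeta$ of \eqref{ric ex 1} into a nontrivial solution $\zeta-\theta$ of the tilted equation to reach a contradiction. Your explicit reduction to the $I$-components (first pinning down $\hat\psi_J\equiv\theta_J$ by linear ODE uniqueness before applying the reduced equation of Theorem \ref{main theorem}) is slightly more careful than the paper's corresponding step, but it is the same argument.
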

\begin{proof}
By \cite[Theorem 3.1 (2)]{MKREM}  the assertion holds when the term 'unique solutions' is replaced by
'minimal unique solution'.

Therefore, to prove the present assertion, it suffices to show that for  $\theta\in\mathcal Y$, $F(\theta)=l$, $R(\theta)=\lambda$, the unique minimal solutions $\phi(\cdot,\theta)\equiv 0$ and $\psi(\cdot,\theta)\equiv\theta$, of the Riccati equations \eqref{ric ex 1}--\eqref{ric ex 2}, are actually unique.

The differential equation \eqref{ric ex 2} for $\phi$ is a trivial once. Therefore it suffices to show that \eqref{ric ex 1} yields unique solutions. 
%
%An inspection of the proof of \cite[Theorem 4.14]{keller} shows that its conclusion holds true also with our slightly different assumptions. To this end it is useful to recall
%that we know that
By \cite[Theorem 3.1 (2)]{MKREM}, we also have that
\[
M_t^x=\exp\left(\langle \theta, X_t-x\rangle-t F(\theta)-\langle R(\theta),\int_0^t X_s ds\rangle\right)
\]
is a $P_x$-martingale on $[0,\infty)$ with expectation $1$ and
$M_t^x=e^{-\langle \theta,x\rangle}\tilde S_t$. 

Following the lines of the proof of \cite[Theorem 4.14]{keller} concerning exponential tilting let us then conclude there exist measures $Q_x\sim P_x$, for each $x\in D$, such that
$(X_t,\mathbb Q^x)_{t\geq 0,x \in D}$ is an affine process with characteristics
\[
\tilde F(u)=F(u+\theta)-F(\theta),\quad \tilde R(u)=R(u+\theta)-R(\theta)
\]
with real domain $\mathcal Y-\theta$. Furthermore for every $t\in\mathbb R_+^m$, $x\in D$, and $A\in\mathcal F_t$, it holds that
\[
\mathbb E^{Q_x}[1_{A}]=\mathbb E^{P_x}[1_{A}M_t^x],
\]
In particular, we have $(X,Q_x)_{x\in D}$ is conservative. By Theorem \ref{main theorem},
the system of generalized Riccati differential equations
\begin{align}\label{eq ric1 tilde}
\partial_t\tilde\psi(t,u)&=\tilde R(\tilde\psi(t,u),\quad \tilde\psi(0,u)=0,\\\label{eq ric2 tilde}
\partial_t\tilde\phi(t,u)&=\tilde F(\tilde\psi(t,u),\quad \tilde\phi(0,u)=0,
\end{align}
has a unique solution, namely the trivial one. Assume, for a contradiction, there exists a solution $\zeta(t)\neq \theta$
of  \eqref{ric ex 1} with $R(\theta)=\lambda$. Define $\tilde \psi(t):=\zeta(t)-\theta$. Then
$\tilde \psi(t)$ is a solution of \eqref{eq ric1 tilde}, and by assumption $\tilde \psi(t)$ does not vanish identically.
This contradicts Theorem \ref{main theorem}.

\end{proof}

\section{Non-uniqueness for Riccati differential equations}\label{final sec}
In the previous sections, we have shown that a characterizing minimal solution is actually the unique solution of a particular generalized Riccati differential equation. In general, however, we do not have uniqueness, but the affine transform formula \eqref{eq aft} only holds for the unique minimal solution of the associated generalized Riccati differential equations \eqref{eq ric2}--\eqref{eq ric2} (the general theory for real exponential moments is provided in \cite{MKREM}).

The following example of a homogeneous affine pure-jump process is used by \cite{KR2014} for the construction of exponentially affine strict local martingales
and is inspired by Example 9.3 of Duffie et al (2003). Here the linear jump characteristic $\mu(d\xi)$ is the L\'evy measure of a self-decomposable distribution.
Non-uniqueness of the initial value problem below is due to the lack of regularity of the vector field $R(u)$ at the boundary of the moment generating function.
\begin{example}
Let $(X,\mathbb P_x)_{x\in D}$, where $D=\mathbb R_+$, be an affine pure-jump process with
negative linear drift
\[
\beta(x)=-\sqrt{\pi}x
\]
and absolutely continuous compensator
\[
\nu( d\xi, ds):=X_{s-}\mu(d\xi)ds,\quad \mu(d\xi):=\frac{2}{\sqrt\pi}e^{-\xi }\xi^{-3/2}d\xi.
\]
All other parameters of $X$ are zero. Since $\int_0^\infty \xi \mu(d\xi)<\infty$, $X$ is a special martingale with jumps of finite variation. In particular, 
we have the L\'evy-It\^o  decomposition,
\[
X_t=X_0-\sqrt \pi \int_0^t X_{s-}ds+\left(\sum_{s\leq t}\Delta X_s-\int_0^t\xi \nu(d\xi, ds)\right).
\]
$X$ is homogeneous affine with characteristic exponent $\psi(t,u)$ which satisfies for $\Re(u)<1$ the initial value problem
\begin{equation}\label{eq ric ric}
\partial_t\psi(t,u)=1-\psi(t,u)-\sqrt{1-\psi(t,u)},\quad \psi(0,u)=u.
\end{equation}
It is straightforward to check that the unique solution of this equation is given by
\[
\psi(t,u)=1-\left((1-\sqrt{1-u})e^{-t/2}-1\right)^2.
\]
However, for $u= 1$, also  $g\equiv 1$ is a solution of \eqref{eq ric ric}.
\end{example}

\bibliographystyle{SIAM}

%\bibliography{references}

\end{document}